\newtheorem{theorem}{Theorem}
\newtheorem{proposition}{Proposition}
\newtheorem{lemma}{Lemma}
\newtheorem{example}{Example}
\newcommand{\co}{\overline{co}\hbox{ }}
\newcommand{\R}{\mathbb{R}}
\newcommand{\skal}[2]{\langle #1,#2 \rangle}
\newcommand{\sign}{\operatorname{sign}}
\def\dss{\displaystyle}
\title{Existence of Solutions for \\ Nonconvex Differential Inclusions of Monotone Type}
\author{Elza Farkhi\thanks{School of Mathematical Sciences, Sackler Faculty of Exact
Sciences, Tel Aviv University, 69978 Tel Aviv, Israel, email: 
elza@post.tau.ac.il} \quad Tzanko Donchev\thanks{Department of Mathematics, "Al. I.
Cuza" University, Ia\c{s}i 700506, Romania, email: tzankodd@gmail.com}
 \quad Robert Baier\thanks{University of Bayreuth, Department of Mathematics,
Chair of Applied Mathematics, 95440 Bayreuth, Germany, email: 
robert.baier@uni-bayreuth.de}  
}
\begin{document}

\maketitle


\begin{abstract}
Differential inclusions with compact, upper semi-continuous,  not necessarily convex right-hand sides 
in $\mathbb{R}^n$ are studied. 
Under a weakened monotonicity-type condition the 
existence of solutions is proved. 
\end{abstract}
\begin{quote}
   \textbf{Key words:} differential inclusion, nonconvex right-hand side, existence of solutions,
      weak monotonicity, one-sided Lipschitz condition
\end{quote}
\begin{quote}
   \textbf{2010 Mathematics Subject Classification:} 34A60, 34A12, 47H05
\end{quote}

\section{Introduction} 

We study the autonomous differential inclusion: 
\begin{equation}\label{1} 
 \dot x(t)\in F(x(t)),\quad x(0)= x_0\in \mathbb{R}^n,\quad t\in I=[0,T], 
\end{equation} 
where the set-valued mapping $F$ has compact, not necessarily convex values in $\mathbb{R}^n$,
and is upper semi-continuous, or equivalently, has a closed graph. We also assume
linear growth of $F$, to ensure boundedness of all solutions, 
and a weakened 
monotonicity-type condition in the spirit of the strengthened one-sided Lipschitz (S-OSL) condition
\cite{LemVel:98}.

The results on existence of solutions of such inclusions 
are not so numerous. 
First, one should
mention the well-known existence result in the case of maximal monotone right-hand sides \cite[Sec.~3.2, Theorem~1]{AubCel:84}.
Maximal monotone set-valued maps, as is well-known, are almost everywhere single-valued \cite{Zar:73,Kend:74},
 and at the points where they are not single-valued, their values are convex sets.
Other important existence results for differential inclusions with non-convex right-hand sides
 are the results of Filippov \cite{FIL:67} for Lipschitz $F$, and of Hermes \cite{Her:71}, 
who relaxed the Lipschitz continuity of $F$ to continuity with bounded variation.
The result of \cite{BreCelCol:89} is for upper semi-continuous and cyclically monotone map $F$,
which is a stronger condition than just monotonicity. In \cite{KrRibTsa:07} the phenomenon of
``colliding'' on the set of discontinuities of $F$ is studied and conditions to avoid 
or to escape from this set are investigated.

We prove the existence under another monotonicity-type condition that ensures 
componentwise
monotonicity of the Euler polygons and their derivatives, which is the key for this existence proof.
The meaning of this condition is that the set-valued map $-F(\cdot)$ 
(with images being the pointwise negation of $F(x)$)
satisfies the strengthened one-sided Lipschitz condition \cite{LemVel:98} with a constant zero.
The latter condition is a weaker form
of the S-OSL condition for set-valued maps introduced  in~\cite{Lem:93}, see~\cite[Remark~2.1]{LemVel:98}.

 We give examples that show that our condition, although simple, does not imply monotonicity,
hence does not imply cyclical monotonicity.

\section{Main result}

First we introduce some notation. For every notion used in the paper, but not 
explicitly defined here we refer the reader to \cite{D} . 

Let $v\in \mathbb{R}^n$. We denote by $|v|$ the Euclidean norm of the vector $v$
and by $v_j$ its $j$--th coordinate, i.e.~$v= (v_1,v_2,\ldots,v_n)$. 
Denote by $\mathbb{B}$ the unit ball in $\mathbb{R}^n$. For a bounded set $A\subset\R^n$,
we denote $\|A\|=sup\{\|a\|:a\in A\}$.

We impose the following assumptions in order to prove the existence of solution: 
\vskip 0.5em 
\textbf{A1.} $F: \mathbb{R}^n\rightrightarrows \mathbb{R}^n$ has
compact, nonempty values and closed graph.
\vskip 0.5em 
\textbf{A2.} \textbf{\emph{Linear growth condition}}
There exist constants $A$ and $B$ such that $\| F(x)\|\leq A+ B|x|$
for any $x\in \mathbb{R}^n$. 

The following lemma is a corollary of Gronwall inequality and
{\bf A2}, and  its proof 
is given in~\cite[Remark~3.1]{DonFar:2000} (see also \cite{DonFar:98}). 
\begin{lemma} \label{l1}
 Under \textbf{A1, A2} there exist constants $L$ and $M$ such that $|x(t)|\leq L$ 
and $|\dot x(t)|\leq M$ for every solution $x(\cdot)$ of 
\[
\dot x(t)\in \co F(x(t)+ \mathbb{B})+ \mathbb{B},\ x(0)= x_0. 
 \]
\end{lemma}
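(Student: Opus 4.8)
The plan is to prove the two bounds by a standard Gronwall-inequality argument applied to the "fattened" inclusion. First I would fix an arbitrary solution $x(\cdot)$ of the inclusion $\dot x(t)\in \co F(x(t)+\mathbb{B})+\mathbb{B}$ with $x(0)=x_0$, and estimate the norm of its derivative pointwise in terms of $|x(t)|$. Since $\|F(y)\|\le A+B|y|$ for every $y$ by \textbf{A2}, and the convex hull does not increase the norm of the supporting set, any selection $w\in\co F(x(t)+\mathbb{B})$ satisfies $|w|\le A+B\sup_{|b|\le 1}|x(t)+b|\le A+B(|x(t)|+1)$. Adding the extra unit ball contributes at most $1$, so almost everywhere
\begin{equation}\label{eq:derbound}
 |\dot x(t)|\le A+1+B+B|x(t)|.
\end{equation}
Setting $a:=A+1+B$ and keeping the factor $B$, this is the linear differential-inequality input needed for Gronwall.

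Next I would integrate to control $|x(t)|$. Writing $x(t)=x_0+\int_0^t \dot x(s)\,ds$ and using \eqref{eq:derbound}, one gets
\begin{equation}\label{eq:intbound}
 |x(t)|\le |x_0|+\int_0^t\bigl(a+B|x(s)|\bigr)\,ds = |x_0|+aT+B\int_0^t|x(s)|\,ds.
\end{equation}
Applying the Gronwall inequality to \eqref{eq:intbound} yields $|x(t)|\le (|x_0|+aT)e^{BT}=:L$ for all $t\in I$, a constant depending only on $A$, $B$, $T$, and $x_0$ but not on the particular solution. Substituting this uniform bound back into \eqref{eq:derbound} gives $|\dot x(t)|\le a+BL=:M$ almost everywhere, which is the second claimed bound.

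The only genuinely delicate point is the norm estimate for the fattened right-hand side, i.e.\ verifying that the unit-ball enlargement of the argument and the additive ball together produce the clean linear bound in \eqref{eq:derbound}; everything after that is the textbook Gronwall step. Since the excerpt already attributes the lemma to \cite[Remark~3.1]{DonFar:2000}, I expect no hidden obstacle—the constants $L$ and $M$ are explicit as above, and the main obstacle is merely bookkeeping the effect of the two enlargements on the linear-growth constant rather than any conceptual difficulty.
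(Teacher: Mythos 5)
Your argument is correct and is exactly the standard Gronwall computation that the paper itself invokes (it states the lemma "is a corollary of Gronwall inequality and \textbf{A2}" and outsources the details to the cited reference), with the right bookkeeping of the two unit-ball enlargements giving $|\dot x(t)|\le A+1+B+B|x(t)|$ a.e. The only cosmetic slip is that your display \eqref{eq:intbound} should read "$\le$" rather than "$=$" when $at$ is replaced by $aT$; the conclusion $L=(|x_0|+aT)e^{BT}$ and $M=a+BL$ is unaffected.
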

\vskip 0.5em 
\textbf{A3.} \textbf{\emph{Weak Componentwise Monotonicity (WCM) Condition}}: For every $x,y\in\mathbb{R}^n$ and every 
$v\in F(x)$ there exists $w\in F(y)$ such that 
\begin{equation}\label{mc} 
 (x_j- y_j)(v_j- w_j)\geq 0,\quad \forall \ j=1,2,\ldots, n. 
\end{equation}
In other words, \eqref{mc} means that the negation of the given set-valued map,  $-F(\cdot)$ 
satisfies the S-OSL condition from \cite{LemVel:98} with a constant zero. 

\begin{theorem}\label{ThM}
 Under the conditions \textbf{A1, A2, A3} the differential inclusion (\ref{1}) has a 
solution. 
\end{theorem} 
 To proof the theorem, we use the following Euler-Cauchy construction of polygonal approximate solutions. Fix 
the natural number $N$ and let the mesh size  $\dss h=\frac{T}{N}$ be such that $hM< 1$. Denote 
the mesh points by $t_i= ih$. We define Euler's polygons $x^N:[0,T]\to \R^n$ in the following way: 
We set $x^N(0)=x_0$, and for $t\in [0,t_1]$, we construct 
$x^N(t)= x_0+ tv^0$, 
where $v^0\in F(x_0)$ is arbitrary. 
Further, we construct subsequently the Euler polygons in each subinterval \ $t\in [t_i,t_{i+1}]$, \  
for \ $i=1,\ldots, N-1$, \ by \ $x^N(t)= x^N(t_i)+ (t-t_i)v^i$, \
where the velocity \ $v^i\in F(x_N(t_i))$ is chosen by the assumption A3, such that 
\[
(x^N_j(t_i)- x^N_j(t_{i-1}))(v^i_j- v^{i-1}_j)\geq 0,\ j=1,\ldots,n. 
\]
The following lemma and proposition 
represent the main steps of the proof of Theorem \ref{ThM}.
\begin{lemma}\label{mon}
The polygonal functions $x^N_j(t)$ and their derivatives  $\dot x^N_j(t)$ are monotone 
for every \\ $j\in \{1,2,...,n \}$.
\end{lemma}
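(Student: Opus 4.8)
The plan is to reduce everything to a scalar recursion for the slopes of the $j$-th component and to read off monotonicity from the sign structure of assumption \textbf{A3}. Fix $j\in\{1,\ldots,n\}$ and write $s_i:=v^i_j$ for the $j$-th slope of $x^N$ on $[t_i,t_{i+1}]$, so that $\dot x^N_j(t)=s_i$ for $t\in(t_i,t_{i+1})$. First I would record the elementary identity
\[
x^N_j(t_i)-x^N_j(t_{i-1})=h\,v^{i-1}_j=h\,s_{i-1},
\]
valid for every $i\ge 1$ (the case $i=1$ uses $x^N(t_1)-x^N(t_0)=hv^0$). Substituting this into the selection inequality used in the Euler construction and dividing by $h>0$ turns the vector condition of \textbf{A3} into the scalar recursion
\[
s_{i-1}(s_i-s_{i-1})\ge 0,\qquad i=1,\ldots,N-1,
\]
which I rewrite equivalently as $s_{i-1}\,s_i\ge s_{i-1}^2\ge 0$.

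From this single inequality I would extract two facts simultaneously. The \emph{sign fact}: $s_{i-1}s_i\ge 0$, so consecutive slopes never have strictly opposite signs. The \emph{magnitude fact}: whenever $s_{i-1}\ne 0$, dividing $s_{i-1}s_i\ge s_{i-1}^2$ by $|s_{i-1}|>0$ yields $|s_i|\ge|s_{i-1}|$, and this holds trivially when $s_{i-1}=0$; hence the magnitudes $|s_0|\le|s_1|\le\cdots$ form a nondecreasing sequence.

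Next I combine the two. If all $s_i$ vanish, the sequence is constant and there is nothing to prove. Otherwise let $k$ be the first index with $s_k\ne 0$. Since the magnitudes are nondecreasing and $|s_k|>0$, every later slope is nonzero, and as consecutive slopes share a sign, all of $s_k,s_{k+1},\ldots$ carry the common sign of $s_k$, while $s_0=\cdots=s_{k-1}=0$. If $s_k>0$ the slopes read $0,\ldots,0,s_k,s_{k+1},\ldots$ with nondecreasing positive tail, so $(s_i)$ is nondecreasing and every $s_i\ge 0$; if $s_k<0$ the mirror argument gives $(s_i)$ nonincreasing with every $s_i\le 0$. In either case the piecewise-constant derivative $\dot x^N_j$ is monotone, and since all slopes of the piecewise-linear function $x^N_j$ then share one sign, $x^N_j$ is monotone as well, in the same direction.

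The only delicate point, and the step I expect to require the most care, is the possibility $s_{i-1}=0$, which makes the recursion vacuous and a priori leaves the next slope free to take either sign. The resolution is precisely the magnitude fact: a zero slope can only be followed by an arbitrary slope while the sequence has not yet left the origin, and once a nonzero slope appears the nondecreasing-magnitude property forbids any return to zero and any sign reversal. Thus the zeros can only form an initial block and cannot break monotonicity. This observation is what upgrades the purely local inequality coming from \textbf{A3} into the global componentwise monotonicity asserted in the lemma.
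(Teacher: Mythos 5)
Your proposal is correct and takes essentially the same approach as the paper: both reduce \textbf{A3} via the identity $x^N_j(t_i)-x^N_j(t_{i-1})=hv^{i-1}_j$ to the scalar recursion $v^{i-1}_j(v^i_j-v^{i-1}_j)\ge 0$, locate the first nonzero slope in the $j$-th component, and show the subsequent slopes keep a fixed sign and are monotone in that direction. Your explicit separation into a sign fact and a magnitude fact just makes more careful what the paper compresses into ``continuing in the same way.''
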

\begin{proof} Fix a coordinate $j\in \{1,...,n\}$, and suppose that \ $v^i_j=0$ for $i<k$  and $v^k_j\neq 0$.
Here $k=0$ is possible, i.e.~possibly $v_0^j\neq 0$. Clearly, if  $v^i_j=0$ for all $i\le N$, 
then the claim holds trivially.   If $v^k_j>0$,
then $x^N_j(\cdot)$ is strictly monotone increasing on the subinterval $[t_k,t_{k+1}]$, and therefore 
$x^N_j(t_{k+1})> x^N_j(t_k)$. Again, using the assumption (\ref{mc}),
it is easy to see that $v_j^{k+1}\ge v_j^k>0$. 
On the next subintervals, $[t_i,t_{i+1}],\ i>k$, 
continuing in the same way, we show that  
$\dss \{ v^i_j\}_{i=k}^\infty$ 
is positive and monotone nondecreasing, while $x^N_j(t)$ is increasing. 
If for some $j\in \{1,...,n\}$, \ $v^i_j=0$ for all $i<k$, and $v^k_j<0$, then in a similar way we get that 
$\dss \{ v^i_j\}_{i=k}^\infty$ is negative and monotone nonincreasing, while $x^N_j(t)$  is strictly 
monotone decreasing.  
\end{proof}

The following proposition 
is proved using Helly's selection principle \cite[Chap.~10]{KF} replacing the 
Arzel\`{a}-Ascoli theorem, which is usually applied in precompactness proofs for continuous functions, and used 
to prove the existence of solutions for differential
inclusions with convex right hand sides (see e.g.~\cite[Theorem~2.2]{DonLem:92}).
\begin{proposition}\label{exist} Under the conditions \textbf{A1, A2, A3},
the sequence $x^N(\cdot)$ has a subsequence converging uniformly on $I$ to a 
function $x^\infty(\cdot)$, 
with each coordinate $x_j^\infty(\cdot)$ being monotone. 
Furthermore, $x^\infty(\cdot)$ is a solution of the inclusion \eqref{1}.
\end{proposition}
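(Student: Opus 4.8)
The plan is to prove the proposition in two stages: first extract a uniformly convergent subsequence whose limit has monotone coordinates, and then verify that this limit solves \eqref{1}. Throughout I would use that by Lemma~\ref{l1} the Euler polygons and their derivatives are uniformly bounded, $|x^N(t)|\le L$ and $|\dot x^N(t)|\le M$, uniformly in $N$ and $t$. Indeed, on each subinterval $x^N(t)=x^N(t_i)+(t-t_i)v^i$ with $v^i\in F(x^N(t_i))$, and the mesh condition $hM<1$ keeps $x^N(t)$ within $\mathbb{B}$ of $x^N(t_i)$, so each polygon is a solution of the relaxed inclusion $\dot x\in\co F(x+\mathbb{B})+\mathbb{B}$ to which Lemma~\ref{l1} applies. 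In particular the family $\{x^N\}$ is uniformly bounded and equi-Lipschitz with constant $M$.

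For the first stage I would fix a coordinate $j$ and note that, by Lemma~\ref{mon}, each $x^N_j$ is monotone. Since there are finitely many coordinates and only two possible monotonicity directions, a pigeonhole argument lets me pass to a subsequence along which the direction is fixed for every $j$. On that subsequence the $x^N_j$ are uniformly bounded and monotone, so Helly's selection principle yields a further subsequence converging pointwise on $I$ to a monotone function $x^\infty_j$; diagonalizing over $j=1,\dots,n$ gives a single subsequence with $x^N\to x^\infty$ pointwise and each $x^\infty_j$ monotone. Because the $x^N$ are equi-Lipschitz, pointwise convergence upgrades to uniform convergence on the compact interval $I$, and $x^\infty$ is itself Lipschitz with constant $M$. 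This proves the first assertion.

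For the second stage, the key observation is that Lemma~\ref{mon} also makes each derivative coordinate $\dot x^N_j$ a monotone step function bounded by $M$. Applying Helly's principle once more and diagonalizing, I would extract a further subsequence so that $\dot x^N_j(t)\to g_j(t)$ at all but countably many $t$, with $g_j$ monotone. Writing $x^N(t)=x_0+\int_0^t\dot x^N(s)\,ds$ and using dominated convergence (the integrands are bounded by $M$), the uniform limit gives $x^\infty(t)=x_0+\int_0^t g(s)\,ds$, so $x^\infty$ is absolutely continuous with $\dot x^\infty=g$ almost everywhere. This strong, almost-everywhere convergence of the derivatives — rather than mere weak $L^1$ convergence — is exactly what allows the nonconvexity of $F$ to be tolerated, and it is the heart of the argument.

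Finally I would invoke the closed graph of $F$. Fix $t$ at which $\dot x^\infty(t)$ exists and $\dot x^N(t)\to\dot x^\infty(t)$ (a full-measure set). For each $N$ there is an index $i_N$ with $t\in[t_{i_N},t_{i_N+1})$ and $\dot x^N(t)=v^{i_N}\in F(x^N(t_{i_N}))$. As $N\to\infty$ the mesh shrinks, $t_{i_N}\to t$, and the equi-Lipschitz bound together with uniform convergence gives $x^N(t_{i_N})\to x^\infty(t)$. Thus the graph points $(x^N(t_{i_N}),v^{i_N})$ converge to $(x^\infty(t),\dot x^\infty(t))$, and since $F$ has closed graph by \textbf{A1}, we conclude $\dot x^\infty(t)\in F(x^\infty(t))$. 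As this holds for almost every $t$ and $x^\infty(0)=x_0$, the limit $x^\infty$ is a solution of \eqref{1}. The main obstacle to watch is the bookkeeping in the derivative step: one must ensure that the countable exceptional sets from the repeated Helly selections, together with the non-differentiability points, form a null set, and that the index $i_N$ is chosen so that the closed-graph passage applies at almost every $t$.
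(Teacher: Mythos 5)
Your proposal is correct and follows exactly the route the paper indicates: it replaces the Arzel\`{a}--Ascoli compactness argument by Helly's selection principle, applied both to the monotone coordinates $x^N_j$ (from Lemma~\ref{mon}) and, crucially, to the monotone derivative steps $\dot x^N_j$, so that pointwise a.e.\ convergence of the derivatives plus the closed graph of $F$ yields $\dot x^\infty(t)\in F(x^\infty(t))$ without any convexity of the values. The bookkeeping details you flag (exceptional null sets, the choice of $i_N$, and the bootstrapping of the a priori bounds via Lemma~\ref{l1}) are handled the same way as in the paper's argument.
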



Let us recall that a mapping $F:\R^n\rightrightarrows \R^n$ is \emph{monotone} if
 \begin{align}
    \skal{x - y}{v - w} & \geq 0 \quad (\text{for all}\ x,y \in \R^n,\ v \in F(x), \ w \in F(y)).
   \label{eq:monotone}
 \end{align}
The map $F:\R^n\rightrightarrows \R^n$ is \emph{cyclically monotone} if for every cyclic sequence of 
points  \ $x_0,x_1,\ldots,x_N=x_0$ and all \ $v_i\in F(x_i), i=1,\ldots,N$,
 \begin{align}
    \sum_{i=1}^N\skal{x_i - x_{i-1}}{v_i} & \geq 0. 
   \label{eq:cycl_monotone}
 \end{align}
It is easy to check that every cyclically monotone map is monotone.
The classical monotonicity condition \eqref{eq:monotone} requires that $F(\cdot)$ is almost everywhere single-valued
\cite{Zar:73, Kend:74}. 

In~\cite{BreCelCol:89} an existence proof for solutions of differential inclusions with compact right-hand 
side is given which is cyclically monotone. It is also proved that cyclically monotone map have images
that are subsets of a subdifferential map of a convex function.  

\section{Examples}

We give here examples of set-valued maps 
which are weakened 
monotone and fulfill \textbf{A3},
but are neither monotone nor cyclically monotone.

The mappings of the examples below are not monotone, hence are not cyclically monotone,
 since they are not single-valued almost everywhere.

The following example is a modification of~\cite[Example~2.1]{DonFar:98} in which $G(x) = -F(x)$ is shown to be
 OSL. Here, $F(\cdot)$ satisfies A3, but is not monotone and is discontinuous. 

\begin{example}
   Let $F: \R \rightrightarrows \R$ be defined as
  \label{ex:discont_osl_conv}
    \begin{align*}
       F(t) & = \begin{cases}
                    [-1,0] & \quad (t < 0) \,, \\
                    [-1,1] & \quad (t \geq 0) \,.
                \end{cases}
    \end{align*}
   Then, $F(\cdot)$ has convex images and satisfies \textbf{A3}, but is not
   monotone in the sense of~\eqref{eq:monotone}.
\end{example}

There are maps with compact images fulfilling \textbf{A3} that are weakened monotone, but not monotone,
as the following example 
shows.

\begin{example}
   Let $F,G: \R \rightrightarrows \R$ be defined as $F(t)=\{ t, t^{\frac{1}{3}}\}, \ G(t) =\{ t^{\frac{1}{3}}, t+\sign(t)\}$. 
   Then, $F(\cdot), G(\cdot)$ have compact non-convex images, $F$ is continuous, while $G$ is discontinuous at the origin. Both $F$ and $G$
	satisfy \textbf{A3}, but are not monotone in the sense of~\eqref{eq:monotone}.
  \label{ex:cont_osl_comp}
\end{example}

To construct examples of set-valued maps that satisfy \textbf{A3} in higher dimensions, we may take
the Cartesian product 
of such one-dimensional mappings 
and use the simple fact that the union of two
mappings that satisfy \textbf{A3} also satisfy \textbf{A3}.
\begin{example}  \label{ex:2d}
  Let $F: \R^2 \rightrightarrows \R^2$ be defined as
    \begin{align*}
       F(x) & = \begin{cases}
                    (\{x_1^{\frac{1}{3}}\} + [-1,0]) \times ([-2,-1] \cup [1,2]) & \quad (x_1 < 0) \,, \\
                    (\{x_1^{\frac{1}{3}}\} + [-1,1]) \times ([-2,-1] \cup [1,2]) & \quad (x_1 \geq 0) 
                \end{cases}
    \end{align*}
   for $x = (x_1,x_2) \in \R^2$. \\
   Then, $F(\cdot)$ has compact images and satisfies \textbf{A3}, but is not
   monotone in the sense of~\eqref{eq:monotone}.
\end{example} 

\begin{example} \label{multdim}
Let $f: \mathbb{R}\rightrightarrows \mathbb{R}$ be defined as follows: 
\begin{align*} 
f(s)=   \begin{cases} 
 \dss \left\{\sign(s) \right\} & s\neq 0. \\ 
   \{ -1, 1\}  & x=0. 
 \end{cases}
\end{align*} 
Define $F: \mathbb{R}^n \rightrightarrows \mathbb{R}^n$ by 
$\dss  F(x)= \{ \frac{1}{2} \big(f(x_1),f(x_2),\ldots, f(x_n) \big), \ 
\big(f(x_1),f(x_2),\ldots, f(x_n) \big) \}$. 
Clearly, $F(\cdot)$ satisfies all our conditions, but  is neither 
monotone, nor cyclically monotone. 
\end{example}
Clearly, there are monotone mappings which do not satisfy \textbf{A3}. A simple example is the subdifferemtial
of the Euclidean norm. We believe that there are other classes  set-valued maps of monotone-type for which
 existence of solutions of differential inclusions with non-convex 
upper semi-continuous right-hand sides can be proved.

\vspace{0.5em}

\textbf{Acknowledgements.} 
The research of the first and the third authors is partially supported by Minkowski Minerva 
Center for Geometry at Tel-Aviv University. 
The second author is supported by a grant of the Romanian National Authority
for Scientific Research, CNCS-UEFISCDI, project number PN-II-ID-PCE-2011-3-0154.
The third author is partially supported also by the European Union Seventh Framework 
Programme [FP7-PEOPLE-2010-ITN] under grant agreement 264735-SADCO.

\end{document}